\let\oldReturn\Return
\renewcommand{\Return}{\State\oldReturn}
\pgfplotsset{compat=1.5}
\def\BibTeX{{\rm B\kern-.05em{\sc i\kern-.025em b}\kern-.08em
    T\kern-.1667em\lower.7ex\hbox{E}\kern-.125emX}}
\newtheorem{theorem}{Theorem}
\newtheorem{corollary}[theorem]{Corollary}
\newtheorem{lemma}[theorem]{Lemma}
\newtheorem{prop}{Proposition}
\newtheorem{prob}{Problem}
\newtheorem{defi}{Definition}
\newcommand{\R}{\mathbb{R}}
\newcommand{\A}{\mathbf{A}}
\newcommand{\B}{\mathbf{B}}
\newcommand{\AG}{\mathbf{A}(G)}
\newcommand{\BS}{\mathbf{B}(S)}
\newcommand{\tA}{{\bf A}_{\times}}
\def \*{\star}
\def \proba{\mathbb{P} }
\newcommand{\specificthanks}[1]{\@fnsymbol{#1}}
\title{Minimizing Inputs for Strong Structural Controllability} 
\author{Kumar Yashashwi\thanks{\textsuperscript{$^*$}Department of Electrical Engineering, Indian Institute of Technology Bombay, India. Email: $\lbrace$kryashashwi,chaporkar$\rbrace$@ee.iitb.ac.in.}\textsuperscript{\specificthanks{1}}, Shana Moothedath\thanks{\textdagger Department of Electrical and Computer Engineering,  University of Washington, USA. Email: sm15@uw.edu}\textsuperscript{\specificthanks{2}}, and Prasanna Chaporkar\textsuperscript{\specificthanks{1}}
}
\begin{document}
\maketitle

\begin{abstract}
	The notion of strong structural controllability ({\it s-controllability}) allows for determining controllability properties of 
	large linear time-invariant systems even when numerical values of the system parameters are not known {\em a priori}. 
	The s-controllability guarantees controllability for all numerical realizations of the system parameters. We address the optimization  problem
	of minimal cardinality input selection for s-controllability. 
	Previous work shows that not only the optimization problem is NP-hard, but  finding an approximate solution is also hard. 
	We propose a randomized algorithm using the notion of zero forcing sets to obtain an optimal solution with high probability.
	We compare the performance of the proposed algorithm with a known heuristic \cite{ChaMes:13} 
	for synthetic random systems and five real world networks, viz. IEEE 39-bus system, re-tweet network, protein-protein interaction network, US airport network, and a network of physicians. It is found that our algorithm performs much better than the 
	heuristic in each of these cases. 
\end{abstract}
\section{Introduction}\label{intro}
With the ever increasing interconnection of networks in today's world, study of complex networks has gained a lot of attention. Large interconnected networks are prevalent in several fields including power systems \cite{PeqPopIliAgu-13}, the Internet \cite{LiuSloBar:11}, biology \cite{Guetal:15},  and multi-agents \cite{ZamLin:09}. Controllability of complex systems is an essential property to achieve intended performance of the system.  
Analyzing controllability of large-scale systems using conventional control theory  is a challenging task as the exact 
numerical entries of the system matrices are often unknown.  {\em  Structural analysis} is a framework to analyze complex systems using the sparsity pattern of their network dynamics \cite{LiuBar:16}. 

{\em Weak structural controllability} and  {\em strong structural controllability} are  key concepts in structural analysis to study controllability of the networks. 
While {\em weak} structural controllability guarantees controllability of \textit{almost all} numerical realizations of the network there 
may exist a thin set\footnote{A non-trivial algebraic variety is `thin' and a set of measure zero \cite{Rei:88}.} of uncontrollable systems with the same sparsity pattern. Weak structural controllability thereby does not guarantee Kalman's controllability \cite{Kai:80}. This has led to the notion of  strong  structural controllability (s-controllability) \cite{MayYam-79}. The {\em s-controllability} of a system guarantees controllability of {\em all} possible numerical systems whose sparsity pattern is same as  that of the network  \cite{JarSvaAlt:11}. s-controllability is a graph theoretic analysis which is useful in two contexts: 
(i)~it verifies controllability of all numerical systems with the given sparsity pattern when only the
sparsity patterns of the state matrices are known, and 
(ii)~it verifies  controllability of systems with known  numerical entries bypassing the  {\em numerical instability} involved in the floating point computations \cite{LiuBar:16}. 

Optimal input design for s-controllability is an important optimization problem in complex systems due to the large system size. 
Actuating many states incurs huge cost in terms of installation and monitoring.  
There has been some effort on input design for  s-control \cite{ChaMes:13},  \cite{PeqPopIliAgu-13},  \cite{TreDel-15}. 
However, they do not provide a computationally efficient scheme/algorithm with guaranteed performance ratio for sparsest input matrix design  on general graph topology.
In this paper, we propose a randomized algorithm based on Markov Chain Monte Carlo (MCMC)  using {\em zero forcing set} (ZFS) \cite{TreDel-15}  to design a sparsest input matrix that guarantees s-controllability. 
In our formulation,   we construct a discrete time Markov chain (DTMC) and a \textit{cost function}, based on the color change rule of ZFS given in \cite{TreDel-15}, and show that the MCMC returns an optimal solution asymptotically.  
  We make the following contributions:\

\noindent
$\bullet$ We propose a computationally efficient randomized algorithm to obtain minimum input selection for s-controllability,
which is known to be notoriously hard even to approximate. 
The proposed algorithm exploits known equivalence between ZFS and s-controllability for quick convergence.

\noindent
$\bullet$  We validate our proposed algorithm on small networks and on large self-damped directed trees for which optimal can
be computed. We show that the algorithm obtains optimal solution in all of the considered cases. We also compare the performance
of our algorithm with that of only known heuristic \cite{ChaMes:13} for large randomly generated networks. 
The experiments show that our algorithm performs much better.

\noindent
$\bullet$  
We  also conduct experiments on five real-world networks, viz. IEEE 39-bus system, re-tweet network,  protein-protein interaction network, US airport network, and network of physicians. In these real world scenarios as well, our algorithm significantly outperforms the heuristic.

\noindent
$\bullet$ Our scheme can also be used for efficient computation of ZFS in a graph. As computation of ZFS is a key step in
solving many problems including that in quantum control theory \cite{bur:13}, PMU placement problems \cite{nuq:05} and inverse eigenvalue problems \cite{bar:08}.
Hence, our approach can also be used for these problems and hence can be of independent interest.

\subsection{Related Work}
Optimal input selection for weak controllability is addressed in many papers (see  \cite{Ols:15}, \cite{PeqKarAgu_2:16},  \cite{MooChaBel:18} and references therein). A linear time algorithm is given in \cite{weber2014linear} for verifying s-controllability. Paper \cite{menara2017number} deals with the number of control
inputs and algebraic characterization of zero-forcing sets
in the framework of s-controllability. s-controllability is  studied in the context of {\em target controllability} in \cite{WaaCamTre:17}, \cite{MonCamTre:15}, where the focus is on controlling a prescribed subset of states rather than the whole system. Design of an s-controllable network, when only nodes are specified, using the notion of {\em balancing sets} is done in \cite{MouHaeMes-18}. 
Our focus in this paper is on sparsest input design for s-controllability of the full network. Further, we consider s-controllability of linear time-invariant systems differing from \cite{GraGarKib-17} and \cite{HarReiSva-13} which consider linear time {\em varying} systems. We now describe the most relevant work to this paper.

Paper \cite{ChaMes:13} proved NP-hardness of sparsest input design problem for s-controllability (using {\em constrained t-matching}) and proposed a heuristic algorithm. The performance of the heuristic algorithm is analyzed by comparing the results with optimal results obtained using exhaustive search method, for smaller networks.  We construct problem instances  to demonstrate that the heuristic algorithm in \cite{ChaMes:13} gives arbitrarily bad performance ratio for which our MCMC-based algorithm gives an optimal solution. Moreover, in the several experiments we conducted, the proposed MCMC method always outperforms the heuristic.  Performance
of our scheme is found to be much better than the heuristic when the state matrix is dense.
Paper \cite{PeqPopIliAgu-13} gave an algorithm to find a sparsest input matrix for s-control when the structured state matrix of the system has so-called {\em maximal staircase} structure. 
On the other hand,  \cite{TreDel-15} proved the equivalence  of s-controllability and  ZFS problem and using the equivalence gave a polynomial-time algorithm to find sparsest input matrix for s-controllability in {\em self-damped} systems with a tree-structure. 
We  do not assume any special graph topology or matrix structure. We give a computationally efficient algorithm to design a sparsest input matrix for s-control of systems with general graph topology. 

\subsection{Organization}\label{subsec:org}
The rest of the paper is organized as follows: Section~\ref{sec:ssc} discusses notations and preliminaries on s-controllability and describes the problem addressed in this paper. Section~\ref{sec:prelim} describes the zero forcing problem and its relation to s-controllability. Section~\ref{sec:sa} presents an MCMC based approach to minimize the cardinality of input set required for s-controllability.    Section~\ref{sec:results} demonstrates the performance of MCMC on synthetic as well as real-world networks. Finally, section~\ref{sec:conclu} gives the concluding remarks on our results.
\section{Strong structural controllability and problem formulation}\label{sec:ssc}
Consider a structured Linear Time-Invariant system (LTI) given by $\dot{x} = {\bf A} x + {\bf B}u$, where $x \in \R^n$ is the system state,
${\bf A} \in \{0, \times \}^{n \times n}$ is the state matrix and ${\bf B} \in \{0, \times \}^{n \times m}$ is the input matrix. Here $\times$ denotes independent free parameters. The LTI system is called
{\em structured system} as exact numerical values in ${\bf A}$ and ${\bf B}$ are not known and only the sparsity pattern is known. The structured system
represents a class of LTI systems $\dot{x} = \tilde{A} x + \tilde{B} u$ such that numerical entry $\tilde{A}_{ij} \not= 0$ 
($\tilde{B}_{ij} \not= 0$, resp.) if and only if
${\bf A}_{ij} =  \times$ (${\bf B}_{ij} =  \times$, resp.). Any such $(\tilde{A},\tilde{B})$ is called {\it numerical realization}  of the structured system
$({\bf A},{\bf B})$.

\begin{defi}
	A structured LTI system $({\bf A},{\bf B})$ is called s-controllable if it is controllable for all of its numerical realizations $(\tilde{A},\tilde{B})$. 
\end{defi}

In this paper, we assume that the structural state matrix ${\bf A}$ is given and our aim is to design a sparsest input matrix ${\bf B}$
such that the LTI system $({\bf A},{\bf B})$ is s-controllable. 
Now, we formally specify our problem. 
Towards this end, let us define ${\cal K}_A$ to be the set of all $n \times m$ structured input matrices 
${\bf B}$ such that $({\bf A},{\bf B})$ is s-controllable and $m$ can take any value in $\{1,\ldots,n\}$. 
Clearly ${\cal K}_A$ is not empty as the $n \times n$ matrix $I$ with
$I_{ii} =  \times$ for each $i$ is in ${\cal K}_A$ (see Proposition~\ref{prop:ssc_thm}).
Define, for a structured matrix $Z$, $|Z|$ to be the number of
$ \times$ entries in $Z$.

\begin{prob}\label{prob:ssc}
	Given a structured system with state matrix $\A$, find
	\[ \B^{\*} ~\in~ \arg\min_{\B \in {\cal K}_A } |\B|. \] 
\end{prob}

{\it Remark: } 
In \cite{ChaMes:13}, it is shown that to obtain $\B^{\*}$, it is enough to consider $n\times m$ $\B$ matrices
that have exactly one non-zero entry in each column and each row. Thus, Problem~\ref{prob:ssc} can be alternatively stated as follows: find a smallest cardinality set $S^\*$ of system states such that actuating them independently
results in a s-controllable system.

In the next section, we provide preliminaries and state relevant known results.

\section{Preliminaries} \label{sec:prelim}
For a given structured LTI system $(\A,\B)$, Reinschke {\it et al.} have given a necessary and sufficient condition to verify
its s-controllability in their seminal paper \cite{ReiSvaWen-92}. We introduce the following concepts before stating
the main result from \cite{ReiSvaWen-92} for the sake of completeness.

\begin{defi}
	A structured pair $(\A, \B)$ is said to be in Form~III if there exists two permutation matrices $P_1$ and $P_2$ such that 
	\[
	P_1\,[\A~ \B]\,P_2=
	\begin{bmatrix}
	\otimes & \cdots & \otimes & \times & 0 & \cdots & 0  \\
	\vdots &  & \vdots & \ddots & \times &  \ddots & \vdots\\
	& & & & \ddots & \ddots & 0\\
	\otimes &  \cdots & \otimes & \cdots & \cdots & \otimes & \times\\
	\end{bmatrix},
	\] 
	where the $\times$ denotes the location of nonzero elements and $\otimes$ can be either zero or nonzero. 
\end{defi}

\begin{defi}
	A modified pattern $\A_\times$ of matrix $\A$ is obtained by replacing the zero entries
	on the diagonal of $\A$ by $\times$.
\end{defi}

Next we state the key result of \cite{ReiSvaWen-92}.

\begin{prop}[Theorem~2, \cite{ReiSvaWen-92}]\label{prop:ssc_thm}
	The structured pair $(\A, \B)$ is s-controllable if and only if
	\begin{enumerate}
		\item the pair $(\mathbf{A,B})$ is of Form~III, and
		\item  the matrix $(\mathbf{A_\times,B})$ can be transformed into Form~III such that the $\times$ elements do not correspond to the diagonal elements in $\mathbf{A}$ that were $\times$'s.
	\end{enumerate}
\end{prop}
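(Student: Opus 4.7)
The plan is to derive the two conditions from the Popov–Belevitch–Hautus (PBH) rank test lifted uniformly over every numerical realization, splitting the spectral parameter into the cases $\lambda = 0$ and $\lambda \neq 0$. Recall that a numerical pair $(\tilde A, \tilde B)$ is controllable iff $[\tilde A - \lambda I \mid \tilde B]$ has full row rank $n$ for every $\lambda \in \mathbb{C}$. S-controllability therefore requires this rank condition to hold for \emph{every} $\lambda$ and \emph{every} realization of $(\A, \B)$. I would establish the proposition by translating each of these two spectral regimes into a purely combinatorial statement about the sparsity pattern.

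For $\lambda = 0$ one must characterize when $[\tilde A \mid \tilde B]$ has full row rank for every realization of $[\A \mid \B]$; the claim is that this holds iff $(\A, \B)$ is in Form III. The ``if'' direction is the easier side: the staircase exposes an $n \times n$ submatrix that, after the permutations, is lower triangular with $\times$ entries on the diagonal, so for any realization the determinantal term obtained by selecting those diagonal positions is a product of independent nonzero free parameters and hence nonzero, yielding full row rank. The ``only if'' direction is delicate: one inductively builds the permutations, at each stage locating a row admitting a unique $\times$ not yet consumed and peeling it off; if no such row existed, a Hall-type defect set would allow a choice of numerical values producing linearly dependent rows, contradicting rank for every realization. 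For $\lambda \neq 0$, the diagonal of $\tilde A - \lambda I$ is uniformly shifted by $-\lambda$, so every diagonal entry is a free parameter and the effective sparsity pattern of the shifted matrix is exactly $\A_\times$. Condition~2 is therefore the same Form III requirement applied to $(\A_\times, \B)$.

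The extra clause in Condition~2 --- that the diagonal $\times$ entries used in the staircase must not coincide with entries already marked $\times$ in $\A$ --- is forced by the following cancellation. If the staircase uses a diagonal position $(i,i)$ where $\A_{ii}$ was already free, one can choose a realization with $\tilde A_{ii} = \lambda$ for some $\lambda \neq 0$, collapsing $\tilde A_{ii} - \lambda$ to $0$ and destroying the nonvanishing determinantal product the rank argument relied on. Conversely, if no such overlap occurs, every diagonal entry used in the staircase of $\A_\times - \lambda I$ is either the nonzero constant $-\lambda$ or $\tilde A_{ii} - \lambda$ with $\tilde A_{ii}$ an independent free parameter, and the greedy peeling argument again yields a nonvanishing minor for every realization and every $\lambda \neq 0$. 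The main obstacle I expect is the converse direction in both regimes: converting a rank deficiency at some $(\lambda, \tilde A, \tilde B)$ into a combinatorial defect witnessing the absence of a Form III permutation, and doing so simultaneously for $\lambda = 0$ and $\lambda \neq 0$ without letting the choice of permutation for one case invalidate the other.
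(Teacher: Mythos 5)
The paper offers no proof of this proposition: it is imported verbatim as Theorem~2 of \cite{ReiSvaWen-92}, so there is no in-paper argument to compare against. Your overall strategy --- lifting the PBH rank test over all realizations and splitting into $\lambda=0$ and $\lambda\neq 0$ --- is the standard route to this result, the sufficiency direction via the permuted lower-triangular staircase is correct, and your explanation of why the staircase positions in condition~2 must avoid the diagonal entries with $\A_{ii}=\times$ (the adversarial choice $\tilde{A}_{ii}=\lambda$ annihilates that pivot) is exactly right.

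Two genuine gaps remain, both in the converse directions. First, a ``Hall-type defect'' is the wrong witness: full row rank for \emph{every} realization is strictly stronger than full term rank (the all-$\times$ $2\times 2$ pattern has term rank $2$ yet admits the rank-$1$ realization with all entries equal to $1$), so a stuck peeling need not produce a Hall defect. The correct witness when the peeling gets stuck is that every column has zero or at least two $\times$'s among the surviving rows; one then chooses the nonzero entries in each such column to sum to zero, so the surviving rows sum to the zero vector and are dependent. Second, and more seriously, for $\lambda\neq 0$ the entries of $\tilde{A}-\lambda I$ are not independent free parameters: every diagonal position with $\A_{ii}=0$ carries the \emph{same} fixed value $-\lambda$, and each position with $\A_{ii}=\times$ carries $\tilde{A}_{ii}-\lambda$ with $\tilde{A}_{ii}\neq 0$ tied to the off-diagonal choice of $\lambda$. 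The column-sum dependency construction must therefore be carried out subject to these couplings, with a single $\lambda$ serving all tied diagonal entries simultaneously. You flag this as the main obstacle but do not resolve it; it is precisely where the real work of the cited theorem lies, and as written the proposal does not close it.
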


The verification of s-controllability of pair $(\A,\B)$ can be done in polynomial time by verifying two conditions stated in Proposition~\ref{prop:ssc_thm} \cite{ReiSvaWen-92}. Recently, Chapman {\it et al.} \cite{ChaMes:13} and Trefois {\it et al.} \cite{TreDel-15} have given graph theoretic algorithms based on the {\it constrained $t$-matching} and 
{\it zero forcing problem}, respectively, for verification of s-controllability when the inputs are dedicated.
Note that for dedicated inputs, the input matrix $\B$ can be obtained as the matrix $\left[e_{i_1}, \ldots, e_{i_m}    \right] \in \{0,\times\}^{n \times m}$.  Here, $e_i$ denotes the structural form of the $i^{\rm th}$ standard basis vector, i.e., $[e_{i}]_{j} = \times$ for $j=i$ and $[e_{i}]_{j} = 0$ otherwise. Since, our work relies on the 
conditions
based on zero forcing problem, we describe that in detail next.

\subsection{Zero Forcing Problem in Graphs} \label{sec:zfs}
The zero forcing problem in a directed graph $G=(V, E)$ is  introduced in \cite{BarBarButCioCveFalGodHaeHogMik-08}.
Nodes in the graph with some abuse of notation are denoted by $x_i$ with $i = 1,\ldots,n$. The edge set $E \subseteq V \times V$.
A node $x_i \in V$ is an out-neighbor of node $x_j \in V$, if there exists a directed edge $(x_j, x_i) \in E$.
Note that if $(x_i,x_i) \in E$ then $x_i$ is also an out-neighbor of $x_i$.
Now, the zero forcing problem can be broadly described as follows. 
Each node in the graph can either be black or white. Let $S \subseteq V$ be the set of black nodes at the beginning.
Note that $S$ can be the empty set.
White nodes can be forced to change their colour to black based on some
{\it colour changing rule} applied iteratively until no colour change as per the specified colour changing rule is possible. If starting from set $S$, all nodes eventually become black, then
$S$ is called {\it zero forcing set} for graph $G$. Next we define  graphs of interest to us and their corresponding colour changing rule 
 \cite{BarBarButCioCveFalGodHaeHogMik-08}. 

\begin{defi}[Loop Directed Graph]\label{defi:ldg}
	A directed graph $G=(V,E)$ is called a {\it loop directed} if it allows self loops, i.e. $(x_i,x_i)$ can belong to $E$ for any node $x_i\in V$.
\end{defi}


\begin{defi}[Colour Changing Rule]\label{defi:ccr}
	If exactly one out-neighbor $x_j$ of $x_i$ is white, then change color of $x_j$ to black. 
\end{defi} 

{\it Remark :} Note that colour of $x_i$ plays no role in deciding whether its out-neighbor $x_j$ changes colour to black unless
$(x_i,x_i)\in E$, i.e. $x_i$ is also an out-neighbor of $x_i$. By Definition~\ref{defi:ccr}, if $(x_i,x_i)\in E$ and all other out-neighbors of $x_i$ are black,
then $x_i$ becomes black.




If $x_j$ changes colour because of $x_i$, then we say that 
$x_i$ \textit{forces} $x_j$ and is denoted by $x_i \rightarrow x_j$. Thus, the subsequent colour changes can be alternatively 
represented in terms of the {\it chronological list of forces} applied. When no vertex can be forced to change colour, 
then the iterative colour change 
process stops. Thus, starting from set $S$ of nodes which are assigned black colour in the beginning, we end up with
set $W(S)$ of white nodes that can not be forced to change colour.

\begin{defi}[Zero Forcing Set] \label{defi:zfs}
	A set $S \subseteq V$ is said to be a zero forcing set for the graph $G$ if $W(S)$ is the empty set, i.e. at the termination all nodes in
	the graph become black.
\end{defi}
 
 \begin{defi}[Zero Forcing Number]\label{defi:zfn}
 	Zero forcing number $Z(G)$ of graph $G$ is defined as \[ Z(G) = \min\{|S| \, : \, S\subseteq V \mbox{\ is a zero forcing set of } G \}.  \]
 \end{defi}
 
Next we elaborate on the relation between zero forcing problem and  s-controllability.
 
 \subsection{Zero Forcing and Strong Structural Controllability}
Corresponding to the state matrix $\A$ of LTI system, we can define directed graph $G(\A) = (V,E)$ as follows. 
Define $V$ equals set of states and $(x_j,x_i) \in E$ if $\A_{ij} =  \times$. Also, corresponding to any $S \subseteq V$ 
such that $|S| = m$,
define $\B(S) \in \{0, \times \}^{n\times m}$ as $\B(S) =  [e_i]_{\{i:x_i \in S\}}$.
Thus, $B(S)$ corresponds to the input matrix such that state $x_i$ receives input only if $x_i\in S$.
In terms of the zero forcing problem, the following result gives a necessary and sufficient condition for s-controllability of the  system $(\A, \B)$.
\begin{prop}[Theorem~5.5, \cite{TreDel-15}]\label{prop:ssc_thm_zf}
	The system $(\A, \B)$, where $\B = \BS$, is s-controllable if and only if
	\begin{enumerate}
		\item $S$ is a zero forcing set of $G(\A)$ and
		\item S is a zero forcing set of $G(\A_\times)$ for which there is a chronological list of forces that does not contain any force of the form $x_i \rightarrow x_i$ for any $x_i$ such that $\A_{ii} = \times$.  
	\end{enumerate}
\end{prop}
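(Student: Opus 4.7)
The plan is to reduce the statement to the Form~III characterization of Proposition~\ref{prop:ssc_thm} via a bridging lemma: for any structured $n \times n$ matrix $M$ and any $S \subseteq V(G(M))$, the pair $(M,\B(S))$ is of Form~III if and only if $S$ is a zero forcing set of $G(M)$, and Form~III arrangements of $[M\ \B(S)]$ correspond bijectively to chronological lists of forces on $G(M)$ starting from $S$. Granting this lemma, the proposition follows by applying it once with $M=\A$ (which yields Condition~1) and once with $M=\A_\times$ (which yields Condition~2), together with a check that the diagonal restriction on pivots in Condition~2 of Proposition~\ref{prop:ssc_thm} translates exactly to forbidding forces of the form $x_i \rightarrow x_i$ at states with $\A_{ii} = \times$.

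For the ``ZFS implies Form~III'' direction, given a chronological list of forces, I would construct $P_1,P_2$ explicitly. Let $P_1$ be the row permutation that lists states in reverse order of when they turn black, so row~$1$ holds the last state to become black and row~$n$ holds an initially black state (with arbitrary tie-breaking among the $|S|$ initial black states). For $P_2$, assign to staircase slot $m+i$ the $\A$-column of the forcer of $\rho(i)$ when $\rho(i)$ is a forced state, and the $\B$-column $e_{\rho(i)}$ when $\rho(i)\in S$; the $m$ columns of $[M\ \B(S)]$ not used in the staircase (which are precisely the $\A$-columns of the $m$ states that never forced anyone) occupy the all-$\otimes$ free columns in any order. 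The pivot $\times$ at row~$i$ holds either because the chosen forcer has $\rho(i)$ as an out-neighbor in $G(M)$, or, for a $\B$-column, trivially. The $0$s above the pivot hold because the zero-forcing rule requires the forcer to have no white out-neighbor other than $\rho(i)$ at the moment of forcing, and the reversed ordering makes rows $1,\ldots,i-1$ exactly the states still white at that step; for a $\B$-column the zeros are automatic. The $\otimes$ entries below each pivot impose no constraint, which is consistent because those rows index states that are already black by then.

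For the reverse direction, I would read off a forcing list from any Form~III arrangement by scanning the staircase from bottom to top: the state at row~$n$ is initially black, and the state at row $i<n$ is forced by the state whose column occupies staircase slot $m+i$, with the zero pattern above that pivot certifying the ZFS rule at that instant. To close Condition~2, observe that under this bijection a staircase pivot of $(\A_\times,\B(S))$ at the diagonal position $(j,j)$ encodes the self-force $x_j \rightarrow x_j$ in the forcing chain; hence the restriction in Proposition~\ref{prop:ssc_thm} that such pivots avoid positions where $\A$ originally had $\times$ on the diagonal is precisely the prohibition on self-forces $x_i \rightarrow x_i$ at states with $\A_{ii} = \times$.

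The main obstacle I anticipate is the careful bookkeeping in the lemma, specifically verifying that the $\otimes$ entries below each staircase pivot genuinely correspond to out-neighbors of the forcer that are already black at the moment of that force, so that they impose no constraint on the ZFS rule. A secondary subtlety is the asymmetric treatment of self-loops when working with $\A_\times$: the loops introduced by replacing $0$s on the diagonal of $\A$ are legitimate forcing edges in $G(\A_\times)$, whereas the loops already present in $\A$ must be excluded, and the permutations in the Form~III arrangement of $(\A_\times,\B(S))$ must therefore avoid placing any pivot on an originally self-looped diagonal entry.
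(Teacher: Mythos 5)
Note first that the paper does not prove this proposition at all: it is imported verbatim as Theorem~5.5 of \cite{TreDel-15}, so there is no in-paper argument to compare against. Your reconstruction is sound and is essentially the route taken in the cited literature: the staircase in Form~III of Proposition~\ref{prop:ssc_thm} encodes an elimination order, a pivot column records a forcer whose only out-neighbor among the still-white rows (the rows above the pivot) is the row being forced, and a diagonal pivot of $\A_\times$ at a position where $\A$ already had a $\times$ is exactly a forbidden self-force. Trefois and Delvenne phrase the middle step as an equivalence between zero forcing sets and constrained matchings (the Chapman--Mesbahi formulation of Form~III), but that is the same bookkeeping you carry out directly. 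Two minor points to tighten if you write this up: the correspondence between Form~III arrangements and chronological force lists is many-to-many rather than a bijection (tie-breaking among the $|S|$ initially black rows and among commuting forces, plus free permutation of the $\otimes$-columns), and in the reverse direction you must handle the degenerate case where a state of $S$ receives an $\A$-column pivot while its $\B$-column $e_s$ sits among the free columns --- the extracted ``force'' onto an already-black node should simply be discarded, which does not affect either condition. Neither issue is a genuine gap.
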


An example to illustrate the above concepts is provided in Appendix~\ref{app:illustrative_ex}.
The above proposition establishes equivalence between the s-controllability and the 
zero forcing problem. Thus, finding $\B^\*$ in Problem~\ref{prob:ssc} is equivalent to finding a least cardinality
set $S^\*$ that satisfies the two conditions in Proposition~\ref{prop:ssc_thm_zf}. 
Unfortunately, not only finding
a least cardinality ZFS of a graph is NP-hard \cite{TreDel-15, Aaz:08}, but also 
finding an approximate solution is hard \cite{Mis:11}. 
Thus, a heuristic algorithm to solve Problem~\ref{prob:ssc} can be arbitrarily bad. Indeed, we 
show that the heuristic algorithm proposed in \cite{ChaMes:13} can obtain solution which in $O(n)$ suboptimal,
where $n$ is the number of states (see \cite{arviv_version_SSC}).
Hence, we propose a randomized algorithm that returns an optimal solution with high probability. 
Next, we describe our proposed algorithm.


\section{MCMC based Approach}\label{sec:sa}
As stated in the previous section, no polynomial time algorithm exists for determining sparsest input  matrix that can make the system s-controllable  unless $P=NP$.
So, we propose a randomized scheme that outputs optimal solution with high probability. 
Broadly, our approach is as follows: we define a cost function $C(\cdot)$ 
that assigns a non-negative real number to
every subset of $V$. Thus, $C : {\cal V} \rightarrow \Re_+$, where ${\cal V}$ is the power set of $V$
and $\Re_+$ is the set of non-negative real numbers. 
The function achieves the minimum value for a least cardinality input set (say $S^\*$) which 
guarantees s-controllability of the system. 
Next, we design an algorithm that draws sample, say $Y$, from ${\cal V}$ such that for any $S \in {\cal V}$
\begin{equation}\label{eq:sta_prob}
P_T(S) = \proba(Y = S) = \dfrac{e^{-C(S)/T} }{\sum_{S_1\in{\cal V}} e^{-C(S_1)/T} }, 
\end{equation}
where $T \ge 0$ is referred to as temperature parameter. 
Note that as $T \to 0$, $P_T(S)$ converges to 1 if $C(S)$ is a global minimum value of the cost function $C$.
Thus, for a small enough $T$, the algorithm outputs optimal $S$ with high probability. 
Note that computing distribution $[P_T(S)]_{S \in {\cal V} }$ has exponential complexity as 
$|{\cal V}| = 2^{|V|}$. Hence, we propose to use the well known Markov Chain Monte Carlo (MCMC) method for sampling from the desired distribution \cite{bre:13}. A key step in this approach is to construct a
Discrete Time Markov Chain (DTMC) on space ${\cal V}$ whose stationary distribution $\pi_T(S)$ 
point-wise equals the desired distribution $P_T(S)$. Let $\{Y_T(t)\}_{t\ge 0}$ be the DTMC.
Then it is guaranteed that 
\begin{align} \label{eq:dtmc_conv} 
\lim_{t\to\infty}  \proba(Y_T(t) = S ) = \pi_T(S)
\end{align}
for every $S\in{\cal V}$.
Thus, a random variable $Y_T(t)$ has approximately the desired distribution for $t$ large enough.
Hence, to sample the distribution $[P_T(S)]_{S \in {\cal V} }$, we just need to simulate the DTMC for
sufficiently large $t$ and output its state at time step $t$.
For this approach to be feasible, we should be able to execute each transition of the DTMC in polynomial
time and storage complexity. Also, the convergence in Eq.~(\ref{eq:dtmc_conv}) should be fast enough, so that we need
not simulate the DTMC for prohibitively excessive number of steps to get a good approximation.
Choice of the cost function plays a key role in determining the speed of convergence.
Next, we describe the cost function considered in this paper.
 
\subsection{Cost Function}
Consider $S \in {\cal V}$. We now describe how the cost function at $S$, $C(S)$, is calculated.
Recall that $G(\A)$ and $G(\A_\times)$ are the loop directed graphs corresponding to the state matrix $\A$
and the modified state matrix $\A_{\times}$, respectively. We assign black colour to each of the vertices
in $S$. Starting from these black vertices, we apply colour changing rule iteratively in both $G(\A)$ and $G(\A_\times)$. It should be noted that in $G(\A)$ all nodes with self loop can potentially force themselves,
while in $G(\A_\times)$ only the nodes that do not have self loop in $G(\A)$ can force themselves.
Let $W(S)$ and $W_\times(S)$ denote the set of white nodes that can not be forced in $G(\A)$ and $G(\A_{\times})$,
respectively. Now, we define our cost function as follows.
\begin{align}
C(S)&= |S| + (1+\epsilon)|W(S) \cup W_{\times}(S)|,     \label{eq:costfn}
\end{align}
where $\epsilon > 0$ is a constant (typically small). Note that the cost function can be
evaluated for any $S$ in time $O(n^3)$ \cite{TreDel-15}. Now, we show that the cost function achieves 
minimum value at $S^\*$.

\begin{lemma}\label{lemma:cost_opt}
	Let $S^\*$ be such that $\B(S^\*)$ is an optimal solution of Problem~\ref{prob:ssc}, 
	i.e., structured system $(\A,\B(S^\*))$ is s-controllable, and for any $S$ such that $(\A,\B(S))$ is s-controllable, $|S^\*| \leq |S|$. Then, for every $S\in{\cal V}$, $C(S^\*) \leq C(S)$ with equality 
	if and only if $\B(S)$ is also an optimal solution to Problem~\ref{prob:ssc}.
\end{lemma}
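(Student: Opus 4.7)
The plan is to split on whether $\B(S)$ is itself s-controllable, and use an \emph{augmentation} argument in the non-s-controllable case: adjoining the unforced nodes $W(S) \cup W_\times(S)$ to $S$ produces a new seed that does satisfy both conditions of Proposition~\ref{prop:ssc_thm_zf}, which in turn bounds $|W(S) \cup W_\times(S)|$ from below.

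First I would observe that, by Proposition~\ref{prop:ssc_thm_zf} and the description of $W(\cdot)$ and $W_\times(\cdot)$ preceding Eq.~\eqref{eq:costfn}, the pair $(\A, \B(S))$ is s-controllable if and only if $W(S) = \emptyset$ and $W_\times(S) = \emptyset$. In particular $C(S^\*) = |S^\*|$. This disposes of the easy case: if $\B(S)$ is s-controllable then $C(S) = |S| \ge |S^\*| = C(S^\*)$, with equality iff $|S| = |S^\*|$, i.e.\ iff $\B(S)$ is also optimal.

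The substantive case is when $\B(S)$ is \emph{not} s-controllable, so $k \defeq |W(S) \cup W_\times(S)| \ge 1$. The key monotonicity claim I would establish is: $\widehat S \defeq S \cup W(S) \cup W_\times(S)$ satisfies the two conditions of Proposition~\ref{prop:ssc_thm_zf}, so $\B(\widehat S)$ is s-controllable. For condition~1, note that starting the colour-change procedure in $G(\A)$ from $\widehat S \supseteq S \cup W(S)$ only adds initially black vertices relative to starting from $S$; every force that fired in the $S$-process still fires, and the remaining white nodes after the $S$-process (namely $W(S)$) are already black in $\widehat S$, so every vertex becomes black. The same argument using the modified colour-change rule on $G(\A_\times)$ with seed $\widehat S \supseteq S \cup W_\times(S)$ shows condition~2. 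Hence $|\widehat S| \ge |S^\*|$, and since $|\widehat S| \le |S| + k$, we obtain $|S| + k \ge |S^\*|$.

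Plugging this into Eq.~\eqref{eq:costfn},
\[
C(S) \;=\; |S| + k + \epsilon k \;\ge\; |S^\*| + \epsilon k \;>\; |S^\*| \;=\; C(S^\*),
\]
because $\epsilon > 0$ and $k \ge 1$. Combining both cases gives $C(S^\*) \le C(S)$ with equality iff $\B(S)$ is optimal. The only delicate step is the augmentation claim, since the two colour-change processes run on different graphs with different self-force conventions; but once I note that the $\A_\times$ process on $\widehat S$ was already going to turn the vertices in $W(S)$ black anyway and vice versa (because both $W(S)$ and $W_\times(S)$ are explicitly included in the seed), the argument is just monotonicity of zero-forcing in the initial set.
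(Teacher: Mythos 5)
Your proof is correct and takes essentially the same route as the paper's: both split on whether $\B(S)$ is s-controllable and, in the infeasible case, augment $S$ by $W(S)\cup W_{\times}(S)$ to obtain a set satisfying both conditions of Proposition~\ref{prop:ssc_thm_zf}, with $\epsilon>0$ yielding the strict inequality. The only differences are cosmetic --- the paper argues by contradiction on a minimizer of $C$ while you derive the bound $|S|+k\ge |S^*|$ directly, and you spell out the monotonicity of the two forcing processes that the paper merely asserts.
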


\begin{proof}
	We prove the required by contradiction. Let $\tilde{S}$ be a set of states such that 
	$\tilde{S} \in \arg\min_{S\in{\cal V}} C(S)$. We consider two possibilities, viz. 
	\begin{enumerate}
		\item $(\A,\B(\tilde{S}))$ is \textit{not} s-controllable, and
		\item $(\A,\B(\tilde{S}))$ is s-controllable, but $|S^\*| < |\tilde{S}|$.
	\end{enumerate}

In the first case, $|W(\tilde{S}) \cup W_\times(\tilde{S})| \ge 1$ by Proposition~\ref{prop:ssc_thm_zf}.
Define, $S = \tilde{S} \cup W(\tilde{S}) \cup W_\times(\tilde{S})$. $S$ consists of  all the nodes that can not be
forced black starting from $\tilde{S}$  in addition to the nodes in $\tilde{S}$. Now, $S$ satisfies both the conditions in Proposition~\ref{prop:ssc_thm_zf}. Thus,
$(\A,\B(S))$ is s-controllable. Moreover, $C(S) < C(\tilde{S})$ as $\epsilon > 0$
(see Eq.~\eqref{eq:costfn}). Thus, all local minima of the cost function correspond to  feasible solutions of Problem~\ref{prob:ssc}.
	
In case two, since both sets of inputs $S^\*$ and $\tilde{S}$ guarantee s-controllability,
$|W(S) \cup W_\times(S)| = 0$ for $S \in \{\tilde{S}, S^\*\}$ by Proposition~\ref{prop:ssc_thm_zf}. 
Thus, the value of the cost functions at these sets are simply their cardinality by (\ref{eq:costfn}). 
Hence, $C(S^\*) < C(\tilde{S})$. This proves the required.
\end{proof}
In the next section, we discuss construction of DTMC and show that it has the desired stationary distribution.

\subsection{DTMC construction} \label{DTMC}
Let $\{Y_T(t)\}_{t\ge 0}$ be a DTMC on ${\cal V}$, i.e., $Y_T(t) \in {\cal V}$ for every step~$t$. Here,
$T >0$ is a fixed parameter. Note that the DTMC is a set valued process.
To completely specify the DTMC, it is enough to provide the $|{\cal V}| \times |{\cal V}|$
one step transition probability matrix (TPM), which
we do next. First we describe how transition in the DTMC occur.
Fix $S \in {\cal V}\setminus \{\phi,V  \}$ and define the following sets.
\begin{align*}
{\cal N}_1(S)  & = \{S' : S' = S \cup\{x\} \mbox{ for some $x\in V\setminus S$} \}, \\
{\cal N}_2(S)  & = \{S'  : S' = S \setminus\{x\} \mbox{ for some $x\in S$} \}, \\
{\cal N}_3(S)  & = \{S'  : S' = (S \cup\{x\})\setminus\{x'\}  \mbox{ for $x\in V\setminus S$ and $x'\in S$} \}, \\
{\cal N}(S) &= {\cal N}_1(S) \cup {\cal N}_2(S) \cup {\cal N}_3(S) \cup \{S\}.
\end{align*}
Note that the sets in the collection ${\cal N}_1(S)$ are obtained by adding a new vertex to the set $S$.
The collection ${\cal N}_2(S)$ is obtained by removing an existing vertex from set $S$.
Finally, the collection ${\cal N}_3(S)$ is obtained by removing one existing vertex and adding a new vertex
to $S$. 
Note that for $S = \phi$, ${\cal N}_2(S) \cup {\cal N}_3(S) = \phi$ and for $S =V$, 
${\cal N}_1(S) \cup {\cal N}_3(S) = \phi$.
Moreover, $|{\cal N}_1(S)| = n - |S|$, $|{\cal N}_2(S)| = |S|$ and $|{\cal N}_3(S)| = |S|\times (n - |S|)$.
In our proposed DTMC, transitions are possible only to sets in collection ${\cal N}(S)$ from $S$.
In this sense, the set ${\cal N}(S)$ is referred to as the set of neighbors of $S$. 
Let $Y_T(t) = S$. The transition happens
in two stages: first, a neighbor, say $S_p$, of current set $S$ is proposed as possible next state. Second, the
proposed transition is either accepted and $Y_T(t+1) = S_p$ or it is rejected and $Y_T(t+1) = S$.
The acceptance/rejection of proposed transition is probabilistic and the probability depends on the
value of cost function at $S$ and $S_p$. Now, we describe the TPM of the proposed DTMC.
Define for  a set $S$,

\begin{align*}
p_1(S) = \frac{2 (n -|S|)}{3n}, p_2(S) = \frac{2| S |}{3n}, \mbox{  and }  p_3(S) &= \frac{1}{3}.
\end{align*}

If $Y_T(t+1) = S$, the proposed state $S_p$ is picked uniformly at random from ${\cal N}_w(S)$ 
with probability $p_w(S)$ for $w=1,2,3$. Alternate way to think about this choice is the following.
First choose one of the three actions, viz. add a vertex, remove a vertex and swap vertices, with probabilities
$p_1(S)$, $p_2(S)$ and $p_3(S)$, respectively. In case the add action is chosen, pick a vertex from 
$V\setminus S$ uniformly at random and add it to $S$ to get $S_p$.
In case the remove action is chosen, pick a vertex from 
$S$ uniformly at random and remove it from $S$ to get $S_p$. For the swap action, select one vertex each
from $S$ and $V\setminus S$ uniformly at random and swap them to get $S_p$.
Finally, if $S\in \{\phi,V \}$ and the swap action is chosen, then $S_p$ is chosen to be $S$ itself.
Now, when $Y(t) =S$ and a state $S_p \in {\cal N}(S)$ is proposed, then the probability of the next set chosen 
is given by
\begin{align*}
 Y_T(t+1) & = \begin{cases} S_p \mbox{ w.p. } \min \{e^{(C(S)-C(S_p))/T},1\}, \\
 S, \mbox{ otherwise.} \end{cases}
\end{align*}

Here, w.p. stands for with probability. Consider set $S$ and $S' \in {\cal N}_w(S)$ for some $w \in \{1,2,3\}$. Then, for every $t \ge 0$, $P_T({S,S'})  = \proba(Y_T(t+1) = S' | Y_T(t) = S)=$
\begin{equation}
\begin{cases}
\begin{array}{ll}
 \frac{2}{3n} \min \{e^{(C(S)-C(S'))/T},1\}, &\mbox{ if $w=1,2$}, \\
 \frac{1}{3 |S| (n-|S|)}\min \{e^{(C(S)-C(S'))/T},1\},& \mbox{ if $w=3$}. \label{eq:trans1}
 \end{array}
\end{cases}
\end{equation}

Finally, 
\begin{align}
P_T({S,S}) = 1 - \sum_{S' \in {\cal N}(S) \setminus \{S\}} P_T({S,S'}). \label{eq:trans3}
\end{align}

Eqs.~(\ref{eq:trans1}) and~(\ref{eq:trans3}) provide complete description of the TPM of the proposed DTMC.
Next, we establish the desired properties of the proposed DTMC.

\begin{lemma}
	The constructed DTMC $\{{Y_{T}(t)}\}_{t\geq 0}$ is finite, aperiodic and irreducible for every $T>0$,
	and hence admits a unique steady state probability measure $\bm{\pi}_T$ on ${\cal V}$.
\end{lemma}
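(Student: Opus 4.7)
The plan is to verify the three properties separately and invoke the standard convergence theorem for finite irreducible aperiodic Markov chains. Since $V$ has $n$ elements, the state space $\cal V$ has cardinality $2^n$, so finiteness is immediate and needs no further argument.

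For aperiodicity, I would exhibit at least one state $S\in{\cal V}$ with strictly positive self-loop probability $P_T(S,S)>0$. The easiest choice is $S=\phi$ (or $S=V$): when the swap action is selected, which happens with probability $p_3(S)=1/3$, the construction sets $S_p=S$ by definition, producing a self-transition with probability at least $1/3$. Alternatively, for any interior $S$, because the cost function $C$ takes finitely many values and is not constant on $\cal N(S)$ in general, there exists a neighbor $S_p$ with $C(S_p)>C(S)$, and the Metropolis acceptance ratio $e^{(C(S)-C(S_p))/T}<1$ leaves a strictly positive rejection probability. In either case, since a single aperiodic state suffices in an irreducible chain to make the whole chain aperiodic, this step is done.

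For irreducibility, I would give an explicit path between any two states $S,S'\in{\cal V}$ using only add/remove moves. Order the elements of $S\setminus S'$ arbitrarily as $x_1,\dots,x_k$ and those of $S'\setminus S$ as $y_1,\dots,y_\ell$, and consider the sequence $S=S_0,S_1,\dots,S_k,S_{k+1},\dots,S_{k+\ell}=S'$ obtained by successively removing $x_1,\dots,x_k$ and then adding $y_1,\dots,y_\ell$. Each consecutive pair $(S_i,S_{i+1})$ differs by exactly one element, so $S_{i+1}\in{\cal N}_1(S_i)\cup{\cal N}_2(S_i)$. From Eq.~\eqref{eq:trans1}, the one-step probability is
\[
P_T(S_i,S_{i+1})=\frac{2}{3n}\,\min\!\bigl\{e^{(C(S_i)-C(S_{i+1}))/T},\,1\bigr\},
\]
which is strictly positive for every finite $T>0$ because $C$ takes finite real values and the exponential is positive. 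Multiplying these positive probabilities along the path gives $\proba(Y_T(k+\ell)=S'\mid Y_T(0)=S)>0$, proving irreducibility (the degenerate boundary cases $S\in\{\phi,V\}$ are handled identically since ${\cal N}_1$ or ${\cal N}_2$ is still available).

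Having established that the chain is finite, irreducible, and aperiodic, the standard Perron--Frobenius/ergodic theorem for finite Markov chains immediately yields the existence of a unique stationary probability measure $\bm{\pi}_T$ on $\cal V$. The main (mild) obstacle here is just the bookkeeping around the boundary states $\phi$ and $V$, where some of the neighbor collections degenerate; treating those as explicit edge cases, as indicated above, removes any ambiguity. No new ideas beyond positivity of the Metropolis proposals and one-element edit distance are needed.
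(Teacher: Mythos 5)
Your proof is correct and follows essentially the same route as the paper: finiteness from $|{\cal V}|=2^n$, aperiodicity from the positive self-loop probability at $\phi$, and irreducibility from a positive-probability path of single add/remove moves (the paper routes every such path through $\phi$, while you walk directly along the symmetric difference, which is an immaterial difference). Your version is somewhat more explicit about why each one-step probability is positive, but no new idea is involved.
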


\begin{proof}
	Since $Y_T(t) \in {\cal V}$ for every $t \ge 0$ and $|{\cal V}| = 2^n$, 
	the DTMC clearly takes finitely many distinct values. To show irreducibility, we need to show that
	there exists positive probability path from any state $S$ to any other state $S'$ of the DTMC. This 
	follows by observing that from any DTMC state $S$ there exists a positive probability path to DTMC
	state $\phi$ and from $\phi$ there exists a positive probability path to any DTMC state $S'$.
	The DTMC is aperiodic since the DTMC state $\phi$ satisfies $P_{\phi\phi} > 0$. Thus,
	the DTMC is positive recurrent and has a unique stationary measure $\bm{\pi}_T$ that satisfies
	matrix fixed point equation $\bm{\pi}_T = \bm{\pi}_T \bm{P}$, where $\bm{P}$ is the TPM \cite{bre:13}.
\end{proof}

In the following result, we show that the steady state distribution has the desired form. 

\begin{theorem}
For any fixed $T>0$, the steady state distribution of the constructed DTMC  $\{{Y_{T}(t)}\}_{t\geq 0}$ is given by  
\begin{align}\label{eq:steadt_state_distr}
  \pi_T(S) = \frac{e^{-C(S)/T}}{\Sigma_{S_1\in{\cal V}}e^{-C(S_1)/T}}, \quad \forall S \in {\cal V}.
\end{align}
\end{theorem}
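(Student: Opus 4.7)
My plan is to verify that the claimed $\pi_T$ satisfies the detailed balance equations $\pi_T(S)\, P_T(S,S') = \pi_T(S')\, P_T(S',S)$ for every pair $S,S' \in \mathcal{V}$. Since the previous lemma already establishes that the DTMC is finite, irreducible and aperiodic, and therefore admits a \emph{unique} stationary distribution, it is enough to exhibit any probability measure satisfying detailed balance and conclude that it must coincide with $\bm{\pi}_T$.

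The first step is to decompose the one-step transition probability as a product of a proposal probability $q(S,S')$ and an acceptance probability $\alpha(S,S') = \min\{1, e^{(C(S)-C(S'))/T}\}$. Using the construction in Section~\ref{DTMC}, the proposal probabilities are
\[
q(S,S') = \begin{cases} \dfrac{p_1(S)}{|\mathcal{N}_1(S)|} = \dfrac{2}{3n}, & S' \in \mathcal{N}_1(S), \\[4pt]
\dfrac{p_2(S)}{|\mathcal{N}_2(S)|} = \dfrac{2}{3n}, & S' \in \mathcal{N}_2(S), \\[4pt]
\dfrac{p_3(S)}{|\mathcal{N}_3(S)|} = \dfrac{1}{3|S|(n-|S|)}, & S' \in \mathcal{N}_3(S).
\end{cases}
\]
The key structural observation I would then highlight is that $q$ is \emph{symmetric}: if $S' \in \mathcal{N}_1(S)$ (obtained by adding a vertex), then $S \in \mathcal{N}_2(S')$ (obtained by removing the same vertex), and both proposal probabilities equal $2/(3n)$. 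If $S' \in \mathcal{N}_3(S)$ (a swap), then $|S'| = |S|$, so $S \in \mathcal{N}_3(S')$ and $q(S,S') = q(S',S) = 1/(3|S|(n-|S|))$. For any $S' \notin \mathcal{N}(S) \cup \{S\}$, both $P_T(S,S')$ and $P_T(S',S)$ vanish, so detailed balance is trivial there.

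Once symmetry of $q$ is in hand, detailed balance reduces to the standard Metropolis identity. Writing $Z = \sum_{S_1 \in \mathcal{V}} e^{-C(S_1)/T}$ and plugging in,
\[
\pi_T(S)\,\alpha(S,S') = \frac{1}{Z} e^{-C(S)/T} \min\!\left\{1, e^{(C(S)-C(S'))/T}\right\} = \frac{1}{Z}\min\!\left\{e^{-C(S)/T}, e^{-C(S')/T}\right\},
\]
and the right-hand side is manifestly symmetric in $(S,S')$. Combining this with $q(S,S') = q(S',S)$ for $S \neq S'$ yields $\pi_T(S) P_T(S,S') = \pi_T(S') P_T(S',S)$. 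For $S' = S$ the identity is trivial.

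Summing detailed balance over $S$ gives $\sum_S \pi_T(S) P_T(S,S') = \pi_T(S')$, so $\pi_T$ is a stationary distribution, and uniqueness from the previous lemma forces it to equal the unique stationary measure of the chain, proving the theorem. The only mildly delicate step is checking that the swap moves ($\mathcal{N}_3$) really do preserve cardinality so that the denominator $|S|(n-|S|)$ matches under reversal; everything else is a direct Metropolis-Hastings calculation.
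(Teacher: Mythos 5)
Your proof is correct and follows essentially the same route as the paper: both establish the result by verifying the detailed balance (reversibility) condition $\pi_T(S)P_T(S,S') = \pi_T(S')P_T(S',S)$ using the symmetry of the proposal mechanism within each neighborhood class and the Metropolis acceptance ratio. Your packaging via the identity $\pi_T(S)\alpha(S,S') = \tfrac{1}{Z}\min\{e^{-C(S)/T}, e^{-C(S')/T}\}$ is a slightly cleaner way of writing the same case analysis the paper carries out with a without-loss-of-generality assumption on which cost is larger.
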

\begin{proof}
In order to prove the required, it is enough to show that the distribution $\bm{\pi}_T$ given in
Eq.~(\ref{eq:steadt_state_distr}) satisfies $\pi_{T}({S})P_T({S,S'}) =  \pi_{T}({S'})P_T({S',S})$ 
for every $S$, $S'\in {\cal V}$. 
This implies that the DTMC is time reversible and has the steady state distribution $\bm{\pi}_T$  \cite{bre:13}.
Recall that from $S$, the transitions can happen only to sets in ${\cal N}(S)$. Thus, for any
$S' \not\in {\cal N}(S)$, $P_T({S,S'})=P_T({S',S})=0$ and the required follows trivially.
When $S$ and $S'$ are neighbors, we consider the following two cases:

\

\noindent
$\bullet$ {\it Case 1:} $S' \in {\cal N}_1(S)$ -- Note that when $S' \in {\cal N}_1(S)$, 
then $S \in {\cal N}_2(S')$. Without loss of generality, let $C(S) > C(S')$. Then, using 
 Eqs.~(\ref{eq:trans1})  and (\ref{eq:steadt_state_distr}), it follows that
  \begin{align}
  \pi_{T}(S)P_T({S,S'}) &= \frac{1}{3n} \times\frac{e^{-C(S)/T}}{\Sigma_{S_1\in {\cal V} }e^{-C(S_1)/T}}, \nonumber \\
  &= \frac{e^{(C(S')-C(S))/T}}{3n}\times\frac{e^{-C(S')/T}}{\Sigma_{S_1\in {\cal V} }e^{-C(S_1)/T}}, \nonumber \\
  	&= \pi_{T}(S')P_T({S',S}). \nonumber
\end{align}
This proves the required. Also, note that $S' \in {\cal N}_2(S)$ follows using the exact same 
arguments as above.

\

\noindent
$\bullet$ {\it Case 2:} $S' \in {\cal N}_3(S)$ -- Note that when $S' \in {\cal N}_3(S)$, 
then $S \in {\cal N}_3(S')$. Moreover, $|S| = |S'|$. Without loss of generality, let $C(S) > C(S')$. Then, using  Eqs.~(\ref{eq:trans2})  and (\ref{eq:steadt_state_distr}), it follows that
\begin{align*}
\pi_T(S)P_T({S,S'}) &=  \frac{1}{3 |S| (n-|S|)} \times \frac{e^{-C(S)/T}}{\Sigma_{S_1\in {\cal V}}e^{-C(S_1)/T}}  \\
&= \frac{e^{(C(S')-C(S))/T}}{3 |S'| (n-|S'|)} \times \frac{e^{-C(S')/T}}{\Sigma_{S_1\in {\cal V} }e^{-C(S_1)/T}}, \\
&= \pi_T(S')P_T({S',S}).
\end{align*}

This proves the required.
\end{proof}

\begin{corollary}\label{cor:conv}
	For the DTMC $\{Y_T(t)\}_{t\geq0}$, following holds for every $S \in {\cal V}$:
	\[\proba(Y_T(t) = S) \to \frac{e^{-C(S)/T}}{\Sigma_{S_1\in{\cal V}}e^{-C(S_1)/T}} \mbox{ as $t\to\infty$.} \]
	Thus, 
	\begin{align}\label{eq:conv}
		\lim_{t\to\infty}\proba(Y_T(t) \in {\cal S}^\*) \to 1 \mbox{ as $T\to 0$,}
	\end{align}
	where \[ {\cal S}^\* = {\arg\min}_{ \{S\in {\cal V} : B(S) \in {\cal K}_A\}} |S|.\] 
\end{corollary}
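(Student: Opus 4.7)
The plan is to derive both assertions directly from the previous theorem together with Lemma~\ref{lemma:cost_opt}, treating the two limits in the order in which they appear in the statement. The first assertion is just the standard convergence theorem for a finite, irreducible, aperiodic DTMC: the preceding lemma has already established these three properties for $\{Y_T(t)\}_{t\ge 0}$, and the theorem just proved identifies the unique stationary distribution as $\pi_T(S)=e^{-C(S)/T}/Z_T$, where $Z_T=\sum_{S_1\in{\cal V}} e^{-C(S_1)/T}$. Hence, independently of the initial state, $\proba(Y_T(t)=S)\to \pi_T(S)$ pointwise in $S\in{\cal V}$ (see \cite{bre:13}), which gives the first claim.

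For the second claim, I would analyze the low-temperature limit of the Gibbs measure $\pi_T$. Let $C^{\*}=\min_{S\in{\cal V}} C(S)$. By Lemma~\ref{lemma:cost_opt} the minimum is attained precisely on those $S$ for which $\mathbf{B}(S)$ is an optimal solution of Problem~\ref{prob:ssc}, i.e.\ exactly on ${\cal S}^{\*}$; in particular ${\cal S}^{\*}\neq\emptyset$. Factoring $e^{-C^{\*}/T}$ out of numerator and denominator yields
\[
\pi_T(S)=\frac{e^{-(C(S)-C^{\*})/T}}{\sum_{S_1\in{\cal V}} e^{-(C(S_1)-C^{\*})/T}}.
\]
For $S\in{\cal S}^{\*}$ the numerator is $1$; for $S\notin{\cal S}^{\*}$ we have $C(S)-C^{\*}>0$, so the numerator tends to $0$ as $T\downarrow 0$. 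The denominator is a finite sum of $|{\cal S}^{\*}|$ ones and finitely many terms that vanish as $T\downarrow 0$, so it converges to $|{\cal S}^{\*}|$. Therefore $\pi_T(S)\to 1/|{\cal S}^{\*}|$ on ${\cal S}^{\*}$ and $\pi_T(S)\to 0$ off ${\cal S}^{\*}$.

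Summing over the finite set ${\cal S}^{\*}$ gives $\pi_T({\cal S}^{\*})\to 1$, and since $\lim_{t\to\infty}\proba(Y_T(t)\in{\cal S}^{\*})=\pi_T({\cal S}^{\*})$ by the first assertion applied finitely many times, the desired limit in \eqref{eq:conv} follows. There is no real obstacle here: the only subtle point is making sure that the set $\arg\min_{S\in{\cal V}} C(S)$ actually coincides with ${\cal S}^{\*}$, which is exactly the content of Lemma~\ref{lemma:cost_opt} (the first case in its proof rules out infeasible minimizers, and the second case forces cardinality equality with $S^{\*}$). The order of limits stated in the corollary — first equilibrate at temperature $T$, then lower $T$ — is the natural one and avoids any interchange-of-limits difficulty.
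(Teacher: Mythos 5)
Your proof is correct and follows exactly the route the paper intends: the paper states this corollary without proof, treating it as an immediate consequence of the ergodic theorem for finite irreducible aperiodic chains, the preceding theorem identifying $\pi_T$ as the Gibbs measure for $C$, and Lemma~\ref{lemma:cost_opt} identifying $\arg\min_S C(S)$ with ${\cal S}^\star$. Your low-temperature analysis of $\pi_T$ and your attention to the order of the two limits supply precisely the details the paper omits.
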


The above corollary shows that if we consider the state of the DMTC for a large enough $t$ and a small enough $T$,
then it corresponds to an optimal solution with high probability. The parameter $T$ is referred to as ``temperature" and captures the classical
exploration versus exploitation trade-off. Large value of $T$ allows for the DTMC to explore more, but the
steady state probability mass on ${\cal S}^\*$ may not be high enough. Thus, the probability of finding optimal
solution may not be high enough. On the other hand, if $T$ is chosen to be small, then the exploration rate
becomes small and the chain may get stuck in the local minima for long time. We start with a large value of $T$
and then gradually reduce it. This allows rapid exploration in the beginning, and as time increases the chain
moves only to the states that have a lower cost. Let $T_t$ denote the temperature at time step $t$.
In our case, we start withe some predefined temperature $T_0$. Temperature parameter is updated after every $1000$ time steps. Our update rule is: $T_{1000 k} = \alpha T_{1000 (k-1)}$ with $\alpha < 1$ and $k=1,2,\ldots$. For $t \in \{1000 k,\ldots,1000(k+1) -1 \}$, $T_t = T_{1000 k}$. We conduct the simulations until
$T_t$ stays above a predefined value $T_{\rm stop}$. The value of the DTMC at the stopping time is the output of the algorithm.
Algorithm~\ref{alg:sa} provides the pseudo-code. 
\begin{lemma}
	Each iteration of Algorithm~\ref{alg:sa} has computational complexity $O(n^3)$. Moreover, the storage 
	required is $O(n^2)$. 
\end{lemma}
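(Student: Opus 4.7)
The plan is to decompose a single iteration of Algorithm~\ref{alg:sa} into its elementary operations and charge each operation. A single iteration consists of three phases: (i) sampling the action type and the proposed neighbor $S_p$ from $S$, (ii) evaluating the cost function $C(S_p)$, and (iii) performing the accept/reject step according to Eqs.~(\ref{eq:trans1})--(\ref{eq:trans3}).

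I would first argue that phases (i) and (iii) are cheap. Given the cardinality $|S|$ (which can be maintained incrementally), the probabilities $p_1(S), p_2(S), p_3(S)$ are computed in $O(1)$. If $S$ and its complement $V \setminus S$ are stored as, say, a bit-vector of length $n$ together with an explicit list of their members, then drawing a uniformly random element from either set takes $O(n)$ time and forming $S_p$ from $S$ takes $O(n)$ time. Computing the acceptance probability $\min\{1,e^{(C(S)-C(S_p))/T}\}$ and tossing a biased coin is $O(1)$ once the two cost values are in hand.

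The main step is phase (ii): evaluating $C(S_p) = |S_p| + (1+\epsilon)|W(S_p) \cup W_\times(S_p)|$. The term $|S_p|$ is trivial, so the entire cost is the simulation of the colour-changing rule on the two loop-directed graphs $G(\A)$ and $G(\A_\times)$ starting from $S_p$. For each of these graphs I would argue as follows. Maintain, for every vertex $x_i$, a counter $\mathrm{wout}(x_i)$ equal to the number of currently white out-neighbors of $x_i$; initialization of these counters from the adjacency matrix takes $O(n^2)$. The rule of Definition~\ref{defi:ccr} then reduces to: a vertex $x_i$ can force iff $\mathrm{wout}(x_i) = 1$, in which case the unique white out-neighbor turns black. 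After each forcing, for every in-neighbor of the newly blackened vertex we decrement its counter, costing $O(n)$ per force. Since at most $n$ forces can ever happen and each repeated scan over vertices in search of a fireable one is $O(n)$, the whole process terminates in $O(n^2)$ time. Doing this for both graphs gives $O(n^2)$, and the final union $W(S_p) \cup W_\times(S_p)$ is computed in $O(n)$ by inspecting the remaining white vertices. This matches (and in fact is tighter than) the $O(n^3)$ bound cited from \cite{TreDel-15}, so in particular each iteration is $O(n^3)$.

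For storage, the two adjacency matrices of $G(\A)$ and $G(\A_\times)$ use $O(n^2)$ space, while the current set $S$, the bit vector of colours, and the counters $\mathrm{wout}(\cdot)$ each occupy $O(n)$. The temperature schedule and scalar parameters are $O(1)$. The dominant term is $O(n^2)$, yielding the claimed space bound. The one place requiring care will be to verify that the counters can be updated consistently when $S_p$ differs from $S$ by an add, remove, or swap, but since the zero-forcing computation is repeated from scratch each iteration (or, alternatively, recomputed on a copy), this is not an obstacle; the principal subtlety is simply making the per-force update $O(n)$ rather than $O(n^2)$, which is what delivers the stated per-iteration complexity.
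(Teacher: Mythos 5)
Your proposal is correct, and its overall decomposition coincides with the paper's: both identify the evaluation of $C(S_p)$ as the dominant operation in an iteration, dismiss the sampling and accept/reject phases as $O(n)$ or cheaper, and attribute the $O(n^2)$ storage to the adjacency representation of the input graphs with everything else being $O(n)$. The one genuine difference is in how the dominant step is handled: the paper simply cites the $O(n^3)$ bound for evaluating the zero-forcing-based cost function from \cite{TreDel-15}, whereas you construct an explicit simulation of the colour-changing rule (white-out-neighbor counters, $O(n)$ decrement work per force, at most $n$ forces plus $O(n)$ scans) that runs in $O(n^2)$ per graph. Your argument is sound -- the counter initialization is $O(n^2)$, each of the at most $n$ forces triggers $O(n)$ updates, and the restriction in $G(\A_\times)$ that forbids self-forces $x_i \rightarrow x_i$ when $\A_{ii}=\times$ only removes candidate forces and does not change the accounting -- so you in fact establish a strictly tighter per-iteration bound of $O(n^2)$, which of course implies the stated $O(n^3)$. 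What the paper's citation-based route buys is brevity; what your constructive route buys is a sharper constant-free statement and independence from the particulars of the algorithm in \cite{TreDel-15}.
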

\begin{proof}
	Computationally most complex operation in an iteration of the algorithm is to compute $C(S_p)$, and it has complexity $O(n^3)$. Other operations are $O(n)$. Note that storing the input graph has space complexity $O(n^2)$. Other storage requirements are $O(n)$. This establishes the required.
\end{proof}

Next, we evaluate the performance of 
our algorithm.


\begin{algorithm}[t]
\caption{Pseudo-code for proposed MCMC
\label{alg:sa}}
\begin{algorithmic}
\State \textit {\bf Input:} Loop directed graphs $G(\A)$ and $G(\A_{\times}) $
\end{algorithmic}
\begin{algorithmic}
\State \textit{\bf Output:} A subset $S$ of $V$
\end{algorithmic}
\begin{algorithmic}[1]
\State Initialize the parameters ($T_0, T_{\rm stop}, \alpha$) \label{step:initialize}
\State $iter = 0$, $T \leftarrow T_0$ and $S = \phi$ 
\State $C(S) \leftarrow C(\phi)$
\While{$T_{\rm stop} \leq T$}
    \State Generate a sample $w$ from the probability mass function $[p_1(S),p_2(S),p_3(S)]$ 
    \If {$w = 1$ and $V\setminus S \not= \phi$} 
    \State Choose a vertex $x$ from $V\setminus S$ uniformly at random
    \State $S_p \leftarrow S \cup \{x\}$
    \ElsIf {$w = 2$ and $S \not= \phi$} 
    \State Choose a vertex $x$ from $V$ uniformly at random
    \State $S_p \leftarrow S \setminus \{x\}$
    \ElsIf {$w = 3$ and $S \not\in \{\phi,V\}$}
    \State Choose a vertex $x_1$ from $S$ and $x_2$ from $V\setminus S$ each uniformly at random
    \State $S_p \leftarrow (S \cup \{x_2\}) \setminus \{x_1\}$
    \Else 
    \State $S_p \leftarrow S$
    \EndIf
    \State Compute $C(S_p)$
    \State $S \leftarrow S_p$ and $C(S) \leftarrow C(S_p)$ w.p. $\min\{1,e^{(C(S) - C(S_p))/T}\}$ 
    \State $iter \leftarrow iter + 1$
    \If {$(iter \mod 1000) = 0 $}
      \State $T \gets T\alpha$ 
    \EndIf
\EndWhile
\end{algorithmic}
\end{algorithm}

\section{Results and Discussion}\label{sec:results}
Here, we present our experimental results. Performance of our algorithm is compared with optimal (whenever its computation is possible) and with only known heuristic \cite{ChaMes:13}. The following
scenarios are considered -- (1)~small random networks, (2)~large self-damped directed tree networks,
(3)~large random systems and (4)~real world networks.

In all our experiments, we use the following values for parameters in Algorithm~\ref{alg:sa}:
$T_0 = 1.5$, $\alpha = 0.95$ and $T_{\rm stop} = 0.001$. These choices amount to 143,000 iterations.
Next, we discuss our findings. 

\subsection{Small Random Graphs} \label{sec:small_graph}
We construct state matrix $\A$ with dimension varying from 5 to 100. 
Each entry in the matrix is $\times$ with probability $(1+\delta) \log(n)/n$ and zero otherwise,
where $\delta > 0$. 
Entries in the matrix are chosen independently. Note that $G(\A)$ for this construction is an
Erd\H{o}s-Renyi graph with the directed edge probability $(1+\delta) \log(n)/n$  \cite{erdos:60}.
Table~\ref{table:small} shows the results averaged over 100 random state matrices for each $n$.
Optimal results for $n> 20$ could not be obtained. Note that our randomized algorithm obtains
 optimal solution in every case and the heuristic algorithm does not. The results are somewhat expected
as for small $n$, the state space is not very large and it is possible that the randomized algorithm also
explores all the states before termination. Hence, next we consider large systems and see how the proposed
algorithm performs with respect to  optimal  solution and heuristic.

\begin{table} 
	
	\begin{center}
		\caption{Comparison of the number of inputs required for s-controllability under MCMC  and heuristic schemes. Results averaged over $100$ randomly generated networks}
		\label{table:small}
		\begin{tabular}{||c c c c||} 
			
			\hline
			Nodes & Optimal & MCMC & Proposed Heuristic\\ [0.5ex] 
			\hline\hline
			5 & 2.15 & 2.15 & 2.29  \\ 
			\hline
			8 & 2.86 & 2.86 & 3.28 \\ 
			\hline
			10  & 3.33 & 3.33 & 3.78 \\
			\hline
			12 & 3.56 & 3.56 & 4.16 \\  
			\hline
			15 & 4.37 & 4.37 & 5.05 \\ 
			\hline
			18 & 4.84 & 4.84 & 5.86\\
			\hline
			20 & 5.26 & 5.26 & 6.46\\
			\hline
			50 & NA & 11.58 & 14.45\\
			\hline 
			100 & NA & 23.45 & 28.63\\ 
			\hline
		\end{tabular}
	\end{center}

\end{table}

\subsection{Large Directed Tree Networks}\label{sec:tree_graph}
Here, we generate $\A$ matrix randomly such that $G(\A)$ is a directed tree with each node having
self loop ($\A_{ii} = \times$ for every $i$).
We consider these systems as solving Problem~\ref{prob:ssc} for tree networks is possible in polynomial time \cite{TreDel-15}. The results for network sizes 500, 1000, 1500 and 2000 are tabulated in the second column of Table~\ref{table:large}. Each entry in the column is calculated as $(b_{\rm H}(n)-b_{\rm R}(n))/b_{\rm R}(n) \times 100$, where $b_{\rm H}(n)$ and $b_{\rm R}(n)$ denote the average number of inputs chosen by the 
heuristic and our randomized algorithms, respectively, for the network of size $n$.
The average is over 100 different topologies for each network size. We like to mention that the proposed
algorithm has always achieved the optimal. Moreover, the heuristic's performance is about 150\% worse
than that of the proposed algorithm in each case. 

\begin{table} 

\begin{center}

\caption{Percentage reduction in the number of inputs required for s-controllability under the proposed scheme over that under the heuristic. Results averaged over $100$ randomly generated networks}
\label{table:large}
 \begin{tabular}{||c c c||} 

 \hline
 Nodes & Tree & Random System\\ [0.5ex] 
 \hline\hline
 500 & 149.44 & 12.69  \\ 
 \hline
 1000 & 162.52 & 7.54 \\ 
 \hline
 1500 &  164.53 & 5.4 \\
 \hline
 2000 & 170.80 & 3.36 \\  
 \hline
\end{tabular}
\end{center}
\end{table}

\subsection{Large Synthetic Random Graphs}
We conducted experiments on large synthetic random graphs for network sizes 500, 1000, 1500 and 2000. 
The networks are generated in the same way as described in Section~\ref{sec:small_graph}. 
The third column of Table~\ref{table:large} provides the performance comparison between the heuristic and 
the proposed algorithm. The performance metric is calculated in the same way as described in Section~\ref{sec:tree_graph}. In this case, the heuristic has performed reasonably well as compared to
the proposed algorithm. Moreover, the performance gap between the two schemes reduces as the network size 
increases. Here, we would like to note that though the considered ratio is decreasing, the absolute difference between the average number of inputs selected under the two schemes is increasing with $n$. 

\subsection{Real-world networks}
We further carried out experiments on real-world networks. The networks studied along with our findings are listed below:

\noindent
  $\bullet$ {\it Power system network}: controllability of power systems is necessary especially when studying for failures. We used IEEE 39-bus electric power system \cite{data:39bus} which is a benchmark model in power systems. The electric power grid consists of 39 bus system which act as vertices in the directed graph. These buses in the system are interconnected through transmission lines which act as edges. It is important to study which node (bus) needs to be applied with an input so that the electric grid remains controllable for all numerical realizations. Controllability of power systems allows to control the real and reactive powers in the system and to maintain the voltage and frequency of operation at the desired level.
  We applied the proposed algorithm and heuristic to minimize the number of buses to be selected. Results of both the methods are listed in Table~\ref{table:res_real}.
  
  \noindent
  $\bullet$ {\it Social network}: social networks play a crucial role in spread of information. Controllability in social networks aims at modifying the dynamics of a social network in order to drive the states of the network to a set of states that maximizes the welfare of the network. A popular social network is Twitter in which users on the platform can make posts referred as {\it tweets}. We have considered {\it Re-tweet Network} from the UN conference held in Copenhagen \cite{data:twitter}. The network contains 761 nodes each representing a user on Twitter. Edges represent a {\it re-tweet} or {\it mention}. A re-tweet occurs when a user posts the tweet of another user. A mention occurs if a user refers to another user in the post. We minimize the number of users to whom inputs are directly applied in order to be able to control the information flow in the network. Results of the proposed scheme and the heuristic are in Table~\ref{table:res_real}.
  
  \noindent
  $\bullet$  {\it Biological network}: controllability of biological networks has been studied to identify drug targets, protein interaction etc. Modifications and interactions between proteins affect the human health. We use the dataset of Stelzl protein interaction in humans \cite{data:stelzl}. Each vertex represent a protein and edges represent the interaction between them. 
   Results of the proposed algorithm and the heuristic on the biological network are in Table \ref{table:res_real}.
  
  \noindent
  $\bullet$ {\it Infrastructure}: networks can also be constructed for several infrastructure scenarios such as road network and airport network. Controllability of these networks is important for operational reasons. 
  In an airport network, the state nodes correspond to the airports and controllability guarantees a congestion-free operation of the network. We consider the network of flights between US airports in the year 2010 \cite{data:USairport}. Each node in the network represents an airport, and an edge represents a flight connecting them.   Results for s-controllability of these networks using proposed algorithm and the heuristic are presented in Table \ref{table:res_real}.
  
  \noindent
  $\bullet$ { \it Interaction}: flow of ideas and information takes place due to one-to-one interaction among individuals. Every person belongs to different interaction networks and studying the controllability of such networks helps in analyzing the information flow. We study a network that records innovation spread among $246$ physicians belonging to Illinois, Peoria, Bloomington, Quincy, and Galesburg \cite{data:physicians}. Each node represents a physician while an edge represents a physician's interaction to other physician for advice/discussion or unofficial friendly talk. Results for the network are presented in table \ref{table:res_real}.

Note that for sparse networks the heuristic and the proposed randomized algorithm has similar performance.
However, when the graphs become dense on account of increased interactions between network entities, then 
the proposed algorithm significantly outperforms the heuristic.

\begin{table} 
\caption{The number of inputs required for s-controllability of 5 real-world networks using MCMC and heuristic}
\begin{center}
 \begin{tabular}{||c c c c c||} 
\hline 
 Network & Nodes & Edges & MCMC & Heuristic\\ [0.5ex] 
 \hline\hline
 IEEE 39-bus & 39 & 46 & 14 & 14  \\ 
 \hline
 Retweet  & 761 & 1029 & 466 & 466\\ 
 \hline
 \parbox[c]{2.2cm}{Protein-protein \\ interaction}  & 1706 & 6207 & 815 & 819 \\
 \hline
 US airport  & 1574 & 28236 &672 & 715 \\
 \hline
 Physicians & 241 & 1098 & 54 & 76 \\
 \hline
\end{tabular}
\end{center}

\label{table:res_real}
\end{table}
\section{Conclusion}\label{sec:conclu}
Controllability of LTI systems for all numerical realizations (s-controllability) is desired in several scenarios. 
We have considered a problem of finding minimum number of inputs so as to achieve s-controllability.
Unfortunately, this problem is not only NP-hard, but it is also hard to approximate. Indeed, the only known heuristic
for this problem can be shown to perform arbitrarily bad. Hence, we proposed a randomized algorithm that outputs an
optimal solution with high probability. We have tested the proposed algorithm on  large systems for which optimal solution can be computed, and found that the algorithm finds an optimal solution in reasonable time. Even for other large randomly
generated systems the proposed scheme outperforms the heuristic. Finally, we show that the performance of the
proposed scheme is significantly better than the heuristic in several real world networks of practical importance.

\bibliographystyle{myIEEEtran}      
\bibliography{myreferences}
\appendices
\section{Illustrative Example} \label{app:illustrative_ex}
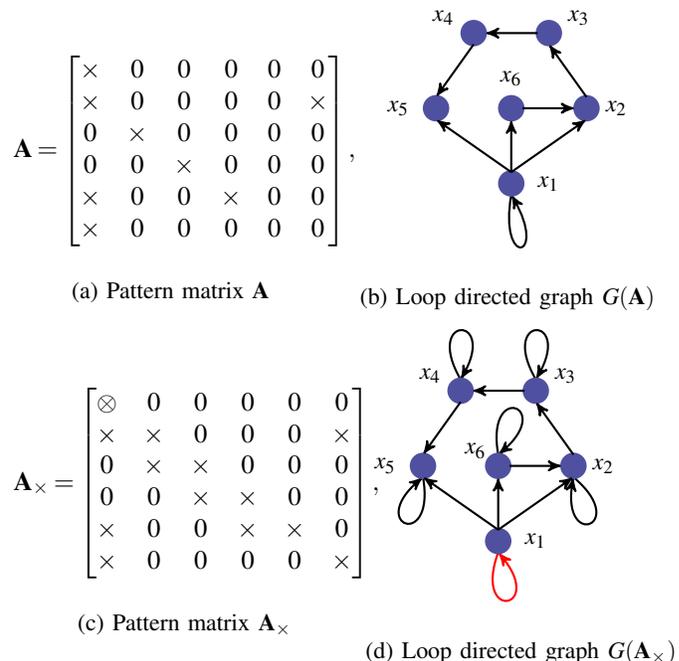
\begin{figure}[h]
	\centering
		\begin{subfigure}{.23\textwidth}
		\vspace{0.4cm}
		\begin{equation*} \label{eq:Amatrix}
		\A = 
		\begin{bmatrix}
		\times & 0 & 0 & 0 & 0 & 0\\
		\times & 0 & 0 & 0 &0 & \times\\
		0 & \times & 0 & 0 & 0 & 0 \\
		0 & 0 & \times & 0 & 0 & 0 \\
		\times & 0 & 0 & \times & 0 & 0 \\
		\times & 0 & 0 & 0 & 0 & 0\\
		\end{bmatrix},
		\end{equation*}
		\caption{ Pattern matrix $\A$} \label{fig:eg1}
	\end{subfigure}
	\begin{subfigure}{.25\textwidth}
		
		\centering
		\begin{tikzpicture}[scale = 1, ->,>=stealth',shorten >=0.5pt,auto,node distance=2cm,
		thick,main node/.style={circle,draw,font=\Large\bfseries}]
		\tikzset{every loop/.style={min distance=10mm,looseness=10}}
		
		\path[->] (0,-2.15) edge [in=-60,out=-100,loop] node[auto] {} ();
		
		\definecolor{myblue}{RGB}{80,80,160}
		\definecolor{mygreen}{RGB}{80,160,80}
		\definecolor{myred}{RGB}{144, 12, 63}
		\definecolor{myyellow}{RGB}{214, 137, 16}
		
		\node at (-0.9,0.2) {\small $x_4$};
		\node at (1.4,-1) {\small $x_2$};
		\node at (0.5,-2) {\small $x_1$};
		\node at (-1.5,-1) {\small $x_5$};
		\node at (0.9,0.2) {\small $x_3$};
		\node at (0,-0.6) {\small $x_6$};
		
		\fill[myblue] (-0.5,0) circle (5.0 pt);  
		\fill[myblue] (1.0,-1) circle (5.0 pt); 
		\fill[myblue] (0,-2) circle (5.0 pt); 
		\fill[myblue] (-1.0,-1) circle (5.0 pt);
		\fill[myblue] (0.5,0) circle (5.0 pt); 
		\fill[myblue] (0,-1) circle (5.0 pt); 
		
		\draw (-0.5,-0.15)  ->  (-1,-0.85); 
		\draw (0,-1.85)  ->  (-1.0,-1.15);
		\draw (0,-1.85)  ->  (1.0,-1.15); 
		\draw (1.0,-0.85)  ->  (0.5,-0.15); 
		\draw (0.35,0)  ->  (-0.35,0); 
		\draw (0,-1.85)  ->  (0,-1.15); 
		\draw (0.15,-1)  ->  (0.85,-1); 
		\end{tikzpicture}
		\caption{Loop directed graph $G(\A)$}
		\label{fig:digraph}
	\end{subfigure}%
\hspace{0.5 cm}
	\begin{subfigure}{.25\textwidth}
	\vspace{0.4cm}
	\begin{equation*} \label{eq:Axmatrix}
	\A_{\times} = 
	\begin{bmatrix}
	\otimes & 0 & 0 & 0 & 0 & 0\\
	\times & \times & 0 & 0 &0 & \times\\
	0 & \times & \times & 0 & 0 & 0 \\
	0 & 0 & \times & \times & 0 & 0 \\
	\times & 0 & 0 & \times & \times & 0 \\
	\times & 0 & 0 & 0 & 0 & \times\\
	\end{bmatrix},
	\end{equation*}
	\caption{Pattern matrix $\A_{\times}$}\label{fig:eg2}
\end{subfigure}
	\begin{subfigure}{.23\textwidth}
		\begin{tikzpicture}
		[scale = 1, ->,>=stealth',shorten >=0.5pt,auto,node distance=2cm,
		thick,main node/.style={circle,draw,font=\Large\bfseries}]
		\tikzset{every loop/.style={min distance=10mm,looseness=10}}

		\path[->] (0,-0.85) edge [in=40,out=100,loop] node[auto] {} ();
		\path[->] (0.5,0.15) edge [in=60,out=120,loop] node[auto] {} ();
		\path[->] (-1.0,-1.15) edge [in=220,out=280,loop] node[auto] {} ();
		\path[->] [red] (0,-2.15) edge [in=-50,out=-110,loop] node[auto] {} ();
		\path[->] (1.0,-1.15) edge [in=-100,out=-40,loop] node[auto] {} ();
		\path[->] (-0.5,0.15) edge [in=60,out=120,loop] node[auto] {} ();
		
		\definecolor{myblue}{RGB}{80,80,160}
		\definecolor{mygreen}{RGB}{80,160,80}
		\definecolor{myred}{RGB}{144, 12, 63}
		\definecolor{myyellow}{RGB}{214, 137, 16}
		
		\node at (-0.9,0.2) {\small $x_4$};
		\node at (1.4,-1) {\small $x_2$};
		\node at (0.5,-2) {\small $x_1$};
		\node at (-1.5,-1) {\small $x_5$};
		\node at (0.9,0.2) {\small $x_3$};
		\node at (-0.3,-0.8) {\small $x_6$};
		
		\fill[myblue] (-0.5,0) circle (5.0 pt);  
		\fill[myblue] (1.0,-1) circle (5.0 pt); 
		\fill[myblue] (0,-2) circle (5.0 pt); 
		\fill[myblue] (-1.0,-1) circle (5.0 pt);
		\fill[myblue] (0.5,0) circle (5.0 pt); 
		\fill[myblue] (0,-1) circle (5.0 pt); 
		
		\draw (-0.5,-0.15)  ->  (-1,-0.85); 
		\draw (0,-1.85)  ->  (-1.0,-1.15);
		\draw (0,-1.85)  ->  (1.0,-1.15); 
		\draw (1.0,-0.85)  ->  (0.5,-0.15); 
		\draw (0.35,0)  ->  (-0.35,0); 
		\draw (0,-1.85)  ->  (0,-1.15); 
		\draw (0.15,-1)  ->  (0.85,-1); 
		\end{tikzpicture}
		\caption{Loop directed graph $G(\A_{\times})$}
		\label{fig:digraph_Ax}
	\end{subfigure}
\caption{\small State matrix $\A$ with corresponding $\A_{\times}$ and the graphs $G(\A)$ and $G(\A_\times)$}\label{fig:eg}

\end{figure}

Figure~\ref{fig:eg1} shows the structural state matrix $\A$. 
Corresponding $G(\A)$, $\A_{\times}$ and $G(\A_\times)$ are shown in Figures~\ref{fig:digraph}, \ref{fig:eg2} and
\ref{fig:digraph_Ax}, respectively (recall Section~\ref{sec:prelim}). Consider loop directed graph $G(\A)$. Initially, consider $S = \phi$.
Note that $x_5$ is a lone out-neighbor of $x_4$, and hence as per colour
changing rule (see Definition~\ref{defi:ccr}), $x_4$ can force $x_5$ to turn
black. A chronological list of forces in $G(\A)$ is as follows:
(1)~$x_4\rightarrow x_5$, (2)~$x_3 \to x_4$, (3)~$x_2\to x_3$ and 
(4)~$x_6\to x_2$. At this point, nodes $x_2, x_3, x_4$ and $x_5$ 
have become black, but no force is possible to change the colours of $x_1$
and $x_6$ as both these nodes are out-neighbor of the same node $x_1$. Thus,
$\phi$ is not a ZFS for $G(\A)$, but either of $\{x_1\}$ and 
$\{x_6\}$ is a ZFS for $G(\A)$. Note that if $x_6$ ($x_1$, resp.) is chosen to be
black initially, then after applying the above forces $x_1$ can force itself ($x_6$, resp.).
Next consider $G(\A_\times)$
and let $S = \{x_6\}$ which is a zero forcing set of $G(\A)$. Following is a chronological sequence of all possible forces 
($i$)~$x_5\to x_5$, ($ii$)~$x_4\to x_4$, 
($iii$)~$x_3\to x_3$, and ($iv$)~$x_2\to x_2$. At this point all nodes except $x_1$ are black. Note 
that though $x_1$ remains to be only white out-neighbor of itself, force
$x_1 \to x_1$ is not allowed (see condition~2) in Proposition~\ref{prop:ssc_thm}).
Thus, $S=\{x_6\}$ does not satisfy both the requirements in Proposition~\ref{prop:ssc_thm}. However,
$S=\{x_1\}$ satisfies the requirement and hence giving input at $x_1$ makes the system  s-controllable. 
Equivalently, $(\A,\B(\{x_1\}))$ is s-controllable.

\section{Performance Analysis of a Heuristic Algorithm for Strong Structural Controllability}
The following example shows that the heuristic proposed by  Chapman {\it et al.} \cite{ChaMes:13} for minimizing the input cardinality required for strong structural controllability can given $O(n)$ suboptimal result. Note that when dedicated inputs are considered, the sparsest input design problem is equivalent to minimum input cardinality selection problem. Consider the following state matrix $\bf A$ and modified matrix $\bf A_\times$

\setcounter{MaxMatrixCols}{15}
\setlength\arraycolsep{3pt}
\[
\bf A=
\begin{bmatrix}
0& 0&0&0& \times & 0 &\cdots & \cdots & \cdots & \cdots & \cdots & \cdots & \cdots& \cdots& 0 \\
0& 0&0&0&\times &  \times & 0 & \cdots & \cdots &\cdots &\cdots &\cdots & \cdots & \cdots& 0 \\
0& 0&0&0& \times &  \times & \times & 0 & \cdots & \cdots &\cdots & \cdots & \cdots& \cdots& 0 \\
0& 0&0&0& 0 &  \times & \times & \times & 0 & \cdots &\cdots & \cdots & \cdots& \cdots& 0 \\
0& 0 &0&0&0&  \times & \times & \times & \times & 0 &\cdots &\cdots &\cdots &\cdots &0\\
0&   0 &0&0&0&  \times & \times & \times & \times & \times &0 &\cdots &\cdots &\cdots &0\\
0&0&0& 0&  0 &  \times & \times & \times & \times & \times &\times &0 &\cdots &\cdots &0\\
0&0&0&0&   0 & 0 & 0 & 0 & \times & \times &\times &\times &0 &\cdots&0\\
0&0&0& 0&  0 & 0 & 0 & 0 & \times & \times &\times &\times &\times &0 &0\\
0&0&0&  0&   0 & 0 & 0 & 0 & \times & \times &\times &\times &\times &\times &0 \\
0&0&0& 0&   0 & 0 & 0 & 0 & \times & \times &\times &\times &\times &\times &\times \\
0&0&0&   0&   0 & 0 & 0 & 0 & 0 & 0 &0 &\times &\times &\times &\times  \\
0&0&0&   0&   0 & 0 & 0 & 0 & 0 & 0 &0 &\times &\times &\times &\times   \\
0&0&0&  0&   0 & 0 & 0 & 0 & 0 & 0 &0 &\times &\times &\times &\times   \\
0&0&0&    0&   0 & 0 & 0 & 0 & 0 & 0 &0 &\times &\times &\times &\times  \\
\end{bmatrix}
\] 

\setcounter{MaxMatrixCols}{15}
\setlength\arraycolsep{3pt}
\[
\bf A_\times=
\begin{bmatrix}

\otimes& 0&0&0& \times & 0 &\cdots & \cdots & \cdots & \cdots & \cdots & \cdots & \cdots& \cdots& 0 \\
0& \otimes&0&0&\times &  \times & 0 & \cdots & \cdots &\cdots &\cdots &\cdots & \cdots & \cdots& 0 \\
0& 0&\otimes&0& \times &  \times & \times & 0 & \cdots & \cdots &\cdots & \cdots & \cdots& \cdots& 0 \\
0& 0&0&\otimes& 0 &  \times & \times & \times & 0 & \cdots &\cdots & \cdots & \cdots& \cdots& 0 \\
0& 0 &0&0&\otimes&  \times & \times & \times & \times & 0 &\cdots &\cdots &\cdots &\cdots &0\\
0&   0 &0&0&0&  \otimes & \times & \times & \times & \times &0 &\cdots &\cdots &\cdots &0\\
0&0&0& 0&   0 &  \times & \otimes & \times & \times & \times &\times &0 &\cdots &\cdots &0\\
0&0&0&0&   0 & 0 & 0 & \times & \times & \times &\times &\times &0 &\cdots&0\\
0&0&0& 0&  0 & 0 & 0 & 0 & \otimes & \times &\times &\times &\times &0 &0\\
0&0&0&  0&   0 & 0 & 0 & 0 & \times & \otimes &\times &\times &\times &\times &0 \\
0&0&0& 0&   0 & 0 & 0 & 0 & \times & \times &\otimes &\times &\times &\times &\times \\
0&0&0&   0&   0 & 0 & 0 & 0 & 0 & 0 &0 &\otimes &\times &\times &\times  \\
0&0&0&   0&   0 & 0 & 0 & 0 & 0 & 0 &0 &\times &\otimes &\times &\times   \\
0&0&0&  0&   0 & 0 & 0 & 0 & 0 & 0 &0 &\times &\times &\otimes &\times   \\
0&0&0&    0&   0 & 0 & 0 & 0 & 0 & 0 & 0 &\times &\times &\times &\otimes  \\
\end{bmatrix}
\] 

Assume the state matrix $\bf A$ to be of dimension $n\times n$ with the same pattern as described. The heuristic proposed in \cite{ChaMes:13} is first applied on state matrix $\bf A$ to obtain set $S_1$. This set $S_1$ has cardinality $(3(n-3)/4) +2$. Using obtained set $S_1$, the input matrix ${\bf B}(S1)$ is formed and the heuristic is now applied on the pair  $ ({\bf A},{\bf B}(S_1))$ returning set $S_2$. Total required inputs is given by the set $S_1 \cup S_2$. Since $S_1$ has cardinality of $O(n)$, the cardinality of $S_1 \cup S_2$ is also $O(n)$. However, the following input matrix $\bf B$ of size $n \times 4$ makes the system strong structurally controllable.

\setcounter{MaxMatrixCols}{15}
\setlength\arraycolsep{3pt}
\[
\bf B=
\begin{bmatrix}
0 & 0 & 0 & 0 \\
\vdots & \vdots & \vdots & \vdots \\
\vdots & \vdots & \vdots & \vdots \\
\vdots & \vdots & \vdots & \vdots \\
0 & 0 & 0 & 0 \\
\times & 0 & 0 & 0 \\
0 & \times & 0 & 0 \\
0 & 0 & \times & 0 \\
0 & 0 & 0 & \times \\

\end{bmatrix}
\]  

Thus, in this example the input design matrix requires input matrix $\bf B$ of cardinality $4$, but the heuristic proposed in \cite{ChaMes:13} gives the cardinality as $O(n)$.
\end{document}